\newtheorem{theorem}{Theorem}[section]
\newtheorem{lemma}[theorem]{Lemma}
\newtheorem{proposition}[theorem]{Proposition}
\newtheorem{question}[theorem]{Question}
\newtheorem{example}[theorem]{Example}
\theoremstyle{definition}\newtheorem{definition}[theorem]{Definition}}
\theoremstyle{definition}}
\numberwithin{equation}{section}
\def\C{{\mathbb C}}
\def\A{{\mathbb A}}
\def\N{{\mathbb N}}
\def\Z{{\mathbb Z}}
\def\R{{\mathbb R}}
\def\epsilon{\varepsilon}
\def\phi{\varphi}
\def\kappa{\varkappa}
\def\leq{\leqslant}
\def\geq{\geqslant}
\def\ker{{\tt ker}\,}
\title{Nice Banach Modules and Invariant Subspaces}
\author{Stanislav Shkarin}
\date{}
\begin{document}

\maketitle

\begin{abstract} Let $\A$ be a semisimple unital commutative Banach algebra. We say that a Banach $\A$-module $M$ is {\it nice} if every proper closed submodule of $M$ is contained in a closed submodule of $M$ of codimension $1$. We provide examples of nice and non-nice modules.
\end{abstract}
\small \noindent{\bf MSC:} \ \ 46J10, 47A15

\noindent{\bf Keywords:} \ \ Banach algebras, Banach modules, Invariant subspaces
\normalsize

\section{Introduction \label{s1}}

In this article, all vector spaces are assumed to be over the field $\C$ of complex numbers. As usual, $\R$ is the field of real numbers, $\N$ is the set of all positive integers, $\Z$ is the set of integers and $\Z_+$ is the set of non-negative integers. For a Banach space $X$, $L(X)$ stands for the  algebra of bounded linear operators on $X$, while $X^*$ is the space of continuous linear functionals on $X$. For $T\in L(X)$, its dual is denoted $T^*$: $T^*\in L(X^*)$, $T'f(x)=f(Tx)$ for every $f\in X^*$ and every $x\in X$.

Throughout this article $\A$ stands for a unital commutative semisimple Banach algebra. It is well-known and is a straightforward application of the Gelfand theory \cite{helem,da} that for an ideal $J$ in $\A$,
$$
J=\A\iff \text{$J$ is dense in $\A$} \iff \text{$\kappa\bigr|_J\neq 0$ for every $\kappa\in\Omega(\A)$}, 
$$
where $\Omega(\A)$ is the spectrum of $\A$, that is, $\Omega(\A)$ is the set of all (automatically continuous) non-zero  algebra homomorphisms from $\A$ to $\C$ (endowed with the $*$-weak topology). Equivalently, every proper ideal in $\A$ is contained in a closed ideal of codimension 1. 

Let $\Omega^+(\A)$ be the set of all algebra homomorphisms from $\A$ to $\C$. That is, $\Omega^+(\A)$ is $\Omega(\A)$ together with the identically zero map from $\A$ to $\C$. The main purpose of this paper is to draw attention to possible extensions of the above fact to Banach $\A$-modules. Clearly, each $\kappa\in\Omega^+(\A)$ gives rise to the 1-dimensional $\A$-module $\C_\kappa$ being $\C$ with the $\A$-module structure given by the multiplication $a\lambda=\kappa(a)\lambda$ for every $a\in\A$ and $\lambda\in\C$. It is also rather obvious that we have just listed all the $1$-dimensional $\A$-modules up to an isomorphism. 

\begin{definition}\label{modchar} Let $M$ be a Banach $\A$-module. A {\it character} on $M$ is a non-zero $\phi\in M^*$ such that there exists $\kappa\in\Omega^+(\A)$ making $\phi$ into an
$\A$-module morphism from $M$ to $\C_\kappa$.
\end{definition}

Obviously, the kernel of a character on a Banach $\A$-module $M$ is a closed $\A$-submodule of $M$.

\begin{definition}\label{nice} Let $M$ be a Banach $\A$-module. We say that $M$ is {\it nice} if for every proper closed submodule of $M$ is contained in a closed submodule of codimension 1. Equivalently, $M$ is nice if and only if for every proper closed submodule $N$ of $M$, there is a character $\phi$ on $M$ such that $\phi$ vanishes on $N$. 
\end{definition}

The general question we would like to raise is: 

\begin{question}\label{mq} Characterize nice Banach $\A$-modules.
\end{question}

The remark we started with ensures that $\A$ is nice as an $\A$-module. In this paper we just present examples of nice and non-nice modules. Before even formulating the results, I would like to put forth my personal motivation for even looking at this question. Assume for a minute that $\A$ is a subalgebra of $L(X)$ for some Banach space $X$. We allow the norm topology of $\A$ to be stronger (not necessarily strictly) than the topology defined by the norm inherited from $L(X)$. The multiplication $(A,x)\mapsto Ax$ defines a Banach $\A$-module structure on $X$. What are the characters on $X$? Why, one easily sees that they are exactly the common eigenvectors of $A^*$ for $A\in\A$. What are the $\A$-submodules of $X$? They are exactly the invariant subspaces for the action of $\A$ on $X$. Thus the $\A$-module $X$ is nice exactly when every non-trivial closed $\A$-invariant subspace of $X$ is contained in a closed $\A$-invariant hyperplane. Thus $X$ being a nice $\A$-module translates into a strong and important property of the lattice of $\A$-invariant subspaces. Note that under relatively mild extra assumptions on $\A$, the nicety of $X$ results in every closed $\A$-invariant subspace being the intersection of a collection of characters on $X$ thus providing a complete description of the lattice of $\A$. A byproduct of this observation is the following easy example of a non-nice module.

\begin{example}\label{ex1} Let $\Omega$ be a non-empty compact subset of $\C$ with no isolated points and $\mu$ be a finite $\sigma$-additive purely non-atomic Borel measure on $\C$, whose support is exactly $\Omega$. The pointwise multiplication equips $L^2(\Omega,\mu)$ with the structure of a Banach $C(\Omega)$-module. This module is non-nice.
\end{example}

\begin{proof} The $C(\Omega)$-module $L^2(\Omega,\mu)$ does have plenty of closed submodules. For instance, every Borel subset $A$ of $\Omega$ satisfying $\mu(A)\neq 0$ and $\mu(\Omega\setminus A)\neq 0$ generates a closed non-trivial submodule $M_A=\{f\in L^2(\Omega,\mu):f\ \text{vanishes outside}\ A\}$. On the other hand, we can always pick $f\in C(\Omega)$ satisfying $\mu (f^{-1}(\lambda))=0$ for every $\lambda\in\C$. In this case the dual of the multiplication by $f$ operator on $L^2(\Omega,\mu)$ has empty point spectrum. Due to the above remark, our module possesses no characters at all (while possessing non-trivial closed submodules) and therefore can not possibly be nice.
\end{proof}

In the positive direction we have the following two rather easy statements. 

\begin{proposition}\label{fge} The finitely generated free $\A$-module $\A^n$ is nice. 
\end{proposition}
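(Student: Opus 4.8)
The plan is to reduce the statement to the characterization of proper ideals recalled in the introduction, by induction on $n$. First I would describe the characters on $\A^n$ concretely: writing $\pi_\kappa:\A^n\to\C^n$ for the module morphism $\pi_\kappa(a_1,\dots,a_n)=(\kappa(a_1),\dots,\kappa(a_n))$, where $\C^n$ carries the $\A$-action of $(\C_\kappa)^n$, every character on $\A^n$ is of the form $\ell\circ\pi_\kappa$ for some $\kappa\in\Omega(\A)$ and some nonzero $\ell\in(\C^n)^*$; this follows by evaluating an arbitrary character on the standard generators $e_1,\dots,e_n$ and using that $\kappa$ cannot be the zero homomorphism on a unital module. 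Since $\pi_\kappa$ is surjective (because $\kappa(1)=1$), such a character vanishes on a submodule $N$ precisely when $\ell$ annihilates the linear subspace $\pi_\kappa(N)\subseteq\C^n$, and a nonzero such $\ell$ exists if and only if $\pi_\kappa(N)\neq\C^n$. Thus the Proposition reduces to the claim that if $N$ is a proper submodule of $\A^n$ then $\pi_\kappa(N)\neq\C^n$ for some $\kappa\in\Omega(\A)$; equivalently, in contrapositive form, \emph{if $\pi_\kappa(N)=\C^n$ for every $\kappa\in\Omega(\A)$, then $N=\A^n$}.

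I would prove the contrapositive form by induction on $n$, noting that closedness of $N$ is not even needed for it. The base case $n=1$ is exactly the fact recalled in the introduction: an ideal $N\subseteq\A$ with $\kappa|_N\neq 0$ (that is, $\pi_\kappa(N)=\C$) for every $\kappa$ must equal $\A$. For the inductive step, let $p:\A^n\to\A$ be the projection onto the last coordinate and $J=p(N)$, an ideal of $\A$. Applying the last-coordinate projection $\C^n\to\C$ to the hypothesis $\pi_\kappa(N)=\C^n$ gives $\kappa(J)=\C$ for every $\kappa$, so by the introduction's fact $J=\A$; here I use that this fact holds for not-necessarily-closed ideals, the underlying point being that a dense ideal of a unital Banach algebra contains an element near $1$, hence an invertible element, hence the identity. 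Therefore there is $w=(w_1,\dots,w_{n-1},1)\in N$, and since $N$ is a submodule it contains $\A w$.

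The splitting is then the heart of the argument. For any $v=(v_1,\dots,v_n)\in N$ the element $v-v_nw\in N$ has last coordinate $0$, so $N=N'+\A w$, where $N'=N\cap(\A^{n-1}\times\{0\})$ is identified with a submodule of $\A^{n-1}$. Writing $\pi'_\kappa:\A^{n-1}\to\C^{n-1}$ for the analogous morphism and $\pi_\kappa(w)=(\hat w,1)$, one has $\pi_\kappa(N)=(\pi'_\kappa(N')\times\{0\})+\C\,(\hat w,1)$. I would then check that $\pi_\kappa(N)=\C^n$ forces $\pi'_\kappa(N')=\C^{n-1}$: given $x\in\C^{n-1}$, expressing $(x,0)$ as $(u,0)+t(\hat w,1)$ with $u\in\pi'_\kappa(N')$ forces $t=0$ through the last coordinate, whence $x=u\in\pi'_\kappa(N')$. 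By the inductive hypothesis $N'=\A^{n-1}\times\{0\}$, and then $N=(\A^{n-1}\times\{0\})+\A w=\A^n$, completing the induction.

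Finally I would assemble the Proposition: given a proper closed submodule $N$, the claim provides $\kappa$ with $\pi_\kappa(N)$ a proper subspace of $\C^n$; choosing a nonzero $\ell\in(\C^n)^*$ annihilating $\pi_\kappa(N)$, the functional $\phi=\ell\circ\pi_\kappa$ is a character on $\A^n$ vanishing on $N$, and $\ker\phi$ is a closed submodule of codimension $1$ containing $N$. The step I expect to be the main obstacle is the surjectivity assertion $p(N)=\A$: projections of closed submodules need not be closed, so one cannot argue via closedness, and it is precisely the invertibility-based form of the introduction's fact that both circumvents this difficulty and produces the element $w$ on which the direct-summand splitting rests.
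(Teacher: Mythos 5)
Your proof is correct, and like the paper's own argument it establishes the stronger, closedness-free statement (the paper isolates this as Proposition~\ref{fge1}); but your inductive step is organized differently from --- essentially dually to --- the paper's. The paper writes $\A^n=\A\times\A^{n-1}$ and works with the \emph{slice} ideal $J$ defined by $M\cap(\A\times\{0\})=J\times\{0\}$: it first uses the induction hypothesis to see that $M$ projects onto all of $\A^{n-1}$, and then splits into cases. If $J=\A$ it factors out the first coordinate and applies the induction hypothesis to the projection; if $J\neq\A$ it picks $\kappa\in\Omega(\A)$ with $J\subseteq\ker\kappa$, checks that $\psi(b)=\kappa(a)$ for $(a,b)\in M$ is a well-defined $\A$-module morphism into $\C_\kappa$, and from the classification of such morphisms manufactures an explicit character on $\A^n$ vanishing on $M$, contradicting the hypothesis. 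You do the opposite: you apply the $n=1$ ideal fact to the \emph{projection} ideal $p(N)$ rather than to the slice, which yields the element $w\in N$ with last coordinate $1$ and hence the case-free splitting $N=N'+\A w$ with $N'=N\cap(\A^{n-1}\times\{0\})$; the hypothesis then descends to $N'$ by a one-line computation, and the induction hypothesis is applied to the slice rather than to the projection. Your arrangement buys a direct argument with no case dichotomy, no proof by contradiction, and no well-definedness check; its price, which you correctly identify and discharge, is that the ideal fact must be invoked for the possibly non-closed ideal $p(N)$, via the observation that a dense ideal of a unital Banach algebra contains an invertible element. The paper's arrangement, in exchange, exhibits concretely the character that must annihilate a proper submodule in the bad case. (The only micro-step you leave implicit --- that $\pi_\kappa(N)$ is a linear subspace of $\C^n$ --- is immediate, since $\kappa(\lambda 1)=\lambda$ makes the image stable under scalar multiplication.) Both routes rest on the same two ingredients, the classification of characters on $\A^n$ and the $n=1$ ideal fact, so the difference is one of mechanism rather than principle; still, the mechanisms are genuinely distinct.
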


\begin{proposition}\label{cfu} Let $\Omega$ be a Hausdorff compact topological space and $X$ be a Banach space. Then the $C(\Omega)$-module $C(\Omega,X)$ is nice, where $C(\Omega,X)$ carries the natural norm $\|f\|=\sup\{\|f(\omega)\|_X:\omega\in\Omega\}$ and the module structure is given by the pointwise multiplication. 
\end{proposition}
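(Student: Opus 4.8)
I would organize the argument around the characters of the module and a localization (gluing) lemma. The plan is to first pin down enough characters on $C(\Omega,X)$ to use, then reduce nicety to a statement about the "fibers" of a submodule, and finally prove that fiber statement with a partition of unity.

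First I would record the relevant characters. Since $\Omega$ is compact Hausdorff, the spectrum $\Omega(C(\Omega))$ consists of the point evaluations $\delta_\omega\colon f\mapsto f(\omega)$, $\omega\in\Omega$. For $\omega\in\Omega$ and a nonzero $\xi\in X^*$, define $\phi_{\omega,\xi}\in C(\Omega,X)^*$ by $\phi_{\omega,\xi}(g)=\xi(g(\omega))$; this is bounded since $|\xi(g(\omega))|\le\|\xi\|\,\|g\|$, and it is nonzero (evaluate on a suitable constant function). It is a character relative to $\kappa=\delta_\omega\in\Omega^+(C(\Omega))$, because $\phi_{\omega,\xi}(fg)=\xi(f(\omega)g(\omega))=f(\omega)\,\xi(g(\omega))=\delta_\omega(f)\,\phi_{\omega,\xi}(g)$. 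Thus, by Definition \ref{nice}, it suffices to show that for every proper closed submodule $N$ of $C(\Omega,X)$ there exist $\omega$ and a nonzero $\xi$ with $\xi(g(\omega))=0$ for all $g\in N$.

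Next I would introduce the fibers $N(\omega)=\{g(\omega):g\in N\}$, which are linear subspaces of $X$. Finding a character $\phi_{\omega,\xi}$ vanishing on $N$ is equivalent to finding an $\omega$ for which $\overline{N(\omega)}\neq X$: once such $\omega$ is available, the Hahn–Banach theorem supplies a nonzero $\xi\in X^*$ annihilating $\overline{N(\omega)}$, and then $\phi_{\omega,\xi}$ vanishes on $N$ while $\ker\phi_{\omega,\xi}$ is a closed submodule of codimension $1$ containing $N$. So the whole statement reduces to the contrapositive: \emph{if $N$ is a closed submodule with $\overline{N(\omega)}=X$ for every $\omega\in\Omega$, then $N=C(\Omega,X)$.}

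The heart of the proof, and the step I expect to be the main obstacle, is this localization lemma, which I would establish by gluing local approximants with a partition of unity. Fix $g\in C(\Omega,X)$ and $\epsilon>0$. For each $\omega$, density of $N(\omega)$ yields $h_\omega\in N$ with $\|h_\omega(\omega)-g(\omega)\|<\epsilon$; by continuity of $h_\omega$ and $g$ there is an open $U_\omega\ni\omega$ with $\|h_\omega(s)-g(s)\|<\epsilon$ for all $s\in U_\omega$. Compactness gives a finite subcover $U_{\omega_1},\dots,U_{\omega_n}$, and since a compact Hausdorff space is normal one obtains (via the shrinking lemma and Urysohn) a subordinate partition of unity $p_1,\dots,p_n\in C(\Omega)$ with $p_i\ge 0$, $\sum_i p_i=1$, and $\supp p_i\subseteq U_{\omega_i}$. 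Then $h=\sum_{i=1}^n p_i\,h_{\omega_i}$ lies in $N$ (each $p_i h_{\omega_i}\in N$ as $N$ is a submodule, and $N$ is closed under finite sums), and for every $s$, writing $h(s)-g(s)=\sum_i p_i(s)\bigl(h_{\omega_i}(s)-g(s)\bigr)$ and noting that only indices with $s\in U_{\omega_i}$ contribute, one gets $\|h(s)-g(s)\|\le\sum_i p_i(s)\,\epsilon=\epsilon$. Hence $\|h-g\|\le\epsilon$, so $g\in\overline N=N$; as $g$ was arbitrary, $N=C(\Omega,X)$. The only delicate point is the existence of partitions of unity for an arbitrary (not necessarily metrizable) compact Hausdorff $\Omega$, which is exactly what normality provides; everything else is a routine estimate. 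This completes the reduction and hence the proof.
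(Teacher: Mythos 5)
Your proof is correct and follows essentially the same route as the paper: you identify the evaluation-type functionals $f\mapsto\xi(f(\omega))$ as characters, reduce nicety via Hahn--Banach to showing that a closed submodule with dense fibers is everything, and prove that by gluing local approximants with a partition of unity on the compact Hausdorff space. The paper merely packages your gluing step as a standalone lemma describing the closure of an arbitrary submodule in terms of its fibers; the substance is identical.
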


Note that Example~\ref{ex1} is rather cheatish since the non-nicety comes from the lack of characters. A really interesting situation is when a non-nice module possesses a separating set of characters. The following result says that this is quite possible. Recall that the Sobolev space $W^{1,2}[0,1]$ consists of the functions $f:[0,1]\to\C$ absolutely continuous on any bounded subinterval of $I$ and such that $f'\in L_2[0,1]$. The space $W^{1,2}[0,1]$ with the inner product
$$
\langle f,g\rangle_{1,2}=\int_0^1 (f(t)\overline{g(t)}+
f'(t)\overline{g'(t)})\,dt
$$
is a separable Hilbert space. We denote $\|f\|_{1,2}=\sqrt{\langle
f,f\rangle_{1,2}}$. Apart from being a Hilbert space, $W^{1,2}[0,1]$ is also a Banach algebra with respect to the pointwise multiplication (if one strives for the submultiplicativity of the norm together with the identity $\|1\|=1$, he or she has to pass to an equivalent norm).

We say that a function $f$ defined on $[0,1]$ and taking values in
a Banach space $X$ is {\it absolutely continuous} if there exists
an (automatically unique up to a Lebesgue-null set) Borel measurable function $g:[0,1]\to X$
such that
$$
\int_0^1\|g(t)\|\,dt<+\infty\  \ \text{and}\ \
\int_0^xg(t)\,dt=f(x)\ \ \text{for each}\ \ x\in[0,1],
$$
where the second integral is considered in the Bochner sense. We
denote the function $g$ as $f'$. If $H$ is a Hilbert space. The
symbol $W^{1,2}([0,1],H)$ stands for the space of absolutely
continuous functions $f:[0,1]\to H$ such that
$$
\int_0^1\|f'(t)\|^2\,dt<+\infty.
$$
The space $W^{1,2}([0,1],H)$ with the inner product
$$
\langle f,g\rangle=\int_0^1 (\langle f(t),g(t)\rangle_H+
\langle f'(t),g'(t)\rangle_H)\,dt
$$
is a Hilbert space and is separable if $H$ is separable. In any
case if $\{e_\alpha\}_{\alpha\in A}$  is an orthonormal basis of
$H$, then the space $W^{1,2}([0,1],H)$ is naturally identified with
the Hilbert direct sum of $|A|$ copies of $W^{1,2}[0,1]$:
$f\mapsto\{f_\alpha\}_{\alpha\in A}$, where $f_\alpha(t)=\langle
f(t),e_\alpha\rangle_H$. It is also clear that $W^{1,2}([0,1],H)$ is naturally isomorphic 
to the Hilbert space tensor product of $W^{1,2}[0,1]$ and $H$. Clearly, $W^{1,2}([0,1],H)$ is a Banach $W^{1,2}[0,1]$-module. This module possesses a lot of characters. Indeed, if $t\in[0,1]$ and $x\in H$, then the functional $f\mapsto \langle f(t),x\rangle_H$ is a character on $W^{1,2}([0,1],H)$. Moreover, these characters do separate points of $W^{1,2}([0,1],H)$.

\begin{theorem}\label{ma} Let $H$ be a Hilbert space. Then the $W^{1,2}[0,1]$-module $W^{1,2}([0,1],H)$ is nice if and only if $H$ is finite dimensional.
\end{theorem}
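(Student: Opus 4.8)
The plan is to handle both directions through one reduction. By Definition~\ref{nice} and the fact that the characters of $M=W^{1,2}([0,1],H)$ are exactly the functionals $\phi_{t,x}\colon f\mapsto\langle f(t),x\rangle_H$ with $t\in[0,1]$, $x\in H$ (here one uses that $\Omega(\A)=[0,1]$ consists of point evaluations and that $M$ is unital, so every character has this shape), $M$ is nice iff every proper closed submodule is annihilated by some $\phi_{t,x}$. Writing $N(t)=\{f(t):f\in N\}$, the functional $\phi_{t,x}$ kills $N$ iff $x\perp N(t)$; hence $M$ is nice iff every proper closed submodule $N$ has a point $t$ with $\overline{N(t)}\neq H$, and $M$ fails to be nice iff there is a proper closed submodule with $\overline{N(t)}=H$ for all $t$ (``dense fibres''). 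So I must show: for $\dim H<\infty$ every closed submodule with dense fibres is all of $M$, while for $\dim H=\infty$ a proper submodule with dense fibres exists.

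For $\dim H=d<\infty$, suppose $N$ has dense (hence, by finiteness, full) fibres. First I would, at each $t$, pick $f_1,\dots,f_d\in N$ whose values form a basis of $H$ at $t$; by continuity of elements of $M$ the determinant stays nonzero on a neighbourhood, and compactness of $[0,1]$ yields finitely many such local frames, each invertible on an open set $U_j$ on whose closure the determinant is bounded away from $0$. Since $\A=W^{1,2}[0,1]$ is a Banach algebra in which the reciprocal of a function bounded away from $0$ again lies in $\A$, Cramer's rule expresses, for any $f\in M$, the local coordinates of $f$ in the $j$-th frame as $\A$-functions on $U_j$. A $W^{1,2}$ partition of unity subordinate to $\{U_j\}$ then writes $f$ as a finite $\A$-linear combination of the frame elements, so $f\in N$ and $N=M$. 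This is routine; the only real input is that finiteness plus compactness produce \emph{finitely many} generators.

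For $\dim H=\infty$ the above patching must break, and I would build the required submodule on the dual side. Since polynomials are dense in $\A$, a closed subspace is a submodule iff it is invariant under $T$, multiplication by the coordinate; dualising, a proper closed submodule with dense fibres exists iff $T^{*}$ has a nonzero closed invariant subspace $W$ containing none of its eigenvectors, these eigenvectors being exactly the Riesz representatives of the characters. Using the equivalent Hilbert model $M\cong H\oplus L^2([0,1],H)$, $f\leftrightarrow(f(0),f')$, one computes
\[
T^{*}(\zeta,v)=\Bigl(\int_0^1 v,\ Sv\Bigr),\qquad S=(V^{*}+M_t)\otimes I_H,
\]
where $V$ is the Volterra operator and $M_t$ is multiplication by $t$ on $L^2[0,1]$, while the character $\phi_{\mu,x}$ corresponds to $(x,\mathbf 1_{[0,\mu)}\otimes x)$. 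The key structural fact is the Volterra identity $(V^{*}+M_t)V^{*}=V^{*}M_t$ on $L^2[0,1]$: it says that $\Theta:=\bigl(\,\int_0^1\cdot\,,\,V^{*}\otimes I_H\bigr)$ intertwines multiplication $M_{t,H}$ on $L^2([0,1],H)$ with $T^{*}$, and $M_{t,H}$ has \emph{no} eigenvectors at all.

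Finally I would take $W=\overline{\Theta Z}$, where $Z$ is the $M_{t,H}$-cyclic subspace generated by the single, genuinely infinite-dimensional vector $w_0(t)=\mathbf 1_{[0,t)}\in H=L^2[0,1]$; then $W$ is a nonzero closed $T^{*}$-invariant subspace, and it remains to verify that $W$ contains no character $(x,\mathbf 1_{[0,\mu)}\otimes x)$. The hard part is exactly this closure control: applying $\Theta$ to \emph{all} of $L^2([0,1],H)$ would recover the characters in the closure, so one must exploit that $Z$ consists only of functions pointwise parallel to $w_0(t)$. Dualising the membership question, $(x,\mathbf 1_{[0,\mu)}\otimes x)\notin W$ amounts to producing $\eta$ with $\langle\int_0^t\eta,\,w_0(t)\rangle=0$ a.e.\ but $\int_0^\mu\langle x,\eta\rangle\neq0$; for $w_0(t)=\mathbf 1_{[0,t)}$ the first condition holds for \emph{every} antisymmetric kernel $\eta(t)(s)=a(t)b(s)-a(s)b(t)$, and a suitable choice (e.g.\ $b=x$ with $\int a\,\bar x=0$ and $\int_0^\mu a\neq0$) makes the second nonzero. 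This blocks every character, so $M$ is not nice; the passage from $H=L^2[0,1]$ to an arbitrary infinite-dimensional $H$ is then a routine embedding. I expect this step — excluding the genuine eigenvectors while keeping $W\neq0$, which is precisely what forces $\dim H=\infty$ and mirrors the failure of the patching argument in Step~2 — to be the main obstacle.
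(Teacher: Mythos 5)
Your reduction to fibre density, your finite\-/dimensional argument, and your dual setup are all sound. In particular, the finite\-/dimensional half (local frames, Cramer's rule, inverse\-/closedness of $W^{1,2}$, a partition of unity) is a correct and genuinely different alternative to the paper's route, which instead identifies $W^{1,2}([0,1],\C^n)$ with the free module $\A^n$ and invokes the niceness of free modules (Proposition~\ref{fge}); likewise the identity $(V^*+M_t)V^*=V^*M_t$ and the intertwining $T^*\Theta=\Theta M_{t,H}$ are correct. The fatal gap is the final step: the subspace $W=\overline{\Theta Z}$ \emph{does} contain eigenvectors of $T^*$, so it is not a counterexample. Concretely, for every $\mu\in(0,1]$ the representative $c_\mu=(w_0(\mu),\mathbf{1}_{[0,\mu)}\otimes w_0(\mu))$ of the character $\phi_{\mu,w_0(\mu)}$ lies in $W$. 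Indeed, $Z$ contains $\phi\, w_0$ for every continuous $\phi$ (approximate $\phi$ uniformly by polynomials), so take nonnegative bumps $\phi_n$ with $\int_0^1\phi_n=1$ concentrating at $\mu$; since $t\mapsto w_0(t)$ is norm continuous, $\int_0^1\phi_n w_0\to w_0(\mu)$ in $H$, and $V^*(\phi_n w_0)(t)=\int_t^1\phi_n w_0\to\mathbf{1}_{[0,\mu)}(t)\,w_0(\mu)$ boundedly, so $\Theta(\phi_n w_0)\to c_\mu$ in norm. The intertwining with the point\-/spectrum\-/free operator $M_{t,H}$ only guarantees that no element of $\Theta Z$ itself is an eigenvector; it says nothing about the \emph{closure}, and it is exactly in the closure that approximate point evaluations converge to genuine ones.

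Your own separation test already signals this: for $x=c\,\mathbf{1}_{[0,\mu)}=c\,w_0(\mu)$ the two requirements $\int a\bar x=0$ and $\int_0^\mu a\neq0$ are contradictory (and for $\mu=0$ the pairing vanishes identically), and no other choice repairs it. One can check that $(\psi_0,\eta)\perp\Theta Z$ holds iff
\[
\int_0^t\psi_0(s)\,ds+\int_0^t\!\!\int_0^t\eta(u)(s)\,du\,ds=0\qquad\text{for all }t\in[0,1],
\]
while the pairing of $(\psi_0,\eta)$ with $c_\mu$ is (up to conjugation) exactly the left\-/hand side at $t=\mu$; hence every vector of $W^{\perp}$ annihilates $c_\mu$, i.e.\ $c_\mu\in(W^{\perp})^{\perp}=W$. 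So this is not a fixable technicality: the characters you fail to exclude really do lie in $W$. The paper escapes this trap by a mechanism that has no counterpart in your $L^2$\-/model: it takes a nowhere dense compact $K\subset[0,1]$ of \emph{positive} measure and generators $f^{[n]}=e_n-\rho_n e_0$, where the $\rho_n$ are uniformly tiny (so a Fredholm\-/type perturbation lemma, Lemma~\ref{5.10}, gives dense fibres at every point) yet have prescribed derivatives on $K$ (Lemmas~\ref{5.7}--\ref{5.9}); the annihilating functional $g(h)=\sum_n\int_K(\rho_n h_n)'(x)\,dx$ is spread over $K$ rather than sitting at points, which is exactly why it is invisible to characters. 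The essential $W^{1,2}$ phenomenon --- a function can be uniformly small while its derivative carries substantial mass on a fat Cantor set --- is what your construction would need to import to exclude the eigenvectors $c_\mu$.
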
 

\section{Proof of Proposition~\ref{cfu}} 

It is easy to see that a character on $C(\Omega,X)$ is exactly a functional of the form 
\begin{equation}\label{charc} 
\kappa_{\omega,\phi}(f)=\phi(f(\omega)),\ \ \text{where $\omega\in\Omega$ and $\phi\in X^*\setminus\{0\}$}.
\end{equation}

The following lemma describes all closed submodules of $C(\Omega,X)$.

\begin{lemma}\label{clmo} Let $M$ be a $C(\Omega)$-submodule of $C(\Omega,X)$ and for each $\omega\in\Omega$ let $M_\omega=\{f(\omega):f\in M\}$. Then the closure $\overline{M}$ of $M$ in $C(\Omega,X)$ satisfies 
\begin{equation}\label{cloM}
\overline{M}=\widetilde{M},\ \ \text{where}\ \widetilde{M}=\{f\in C(\Omega,X):f(\omega)\in \overline{M}_\omega\ \ \text{for each $\omega\in\Omega$}\},
\end{equation}
with $\overline{M}_\omega$ being the closure in $X$ of $M_\omega$. 
\end{lemma}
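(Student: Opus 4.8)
The plan is to prove the two inclusions $\overline{M}\subseteq\widetilde{M}$ and $\widetilde{M}\subseteq\overline{M}$ separately. The first is routine, and I would begin by checking that $\widetilde{M}$ is itself a closed $C(\Omega)$-submodule of $C(\Omega,X)$. Closedness follows because uniform convergence $g_n\to g$ forces $g_n(\omega)\to g(\omega)$ for each $\omega$, and each fibre $\overline{M}_\omega$ is closed in $X$; the module property is immediate since multiplying by $a\in C(\Omega)$ maps the subspace $\overline{M}_\omega$ into itself. As $M\subseteq\widetilde{M}$ trivially, minimality of the closure then yields $\overline{M}\subseteq\widetilde{M}$.

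The substance of the lemma is the reverse inclusion, and here the main tool is a partition-of-unity gluing argument. Fix $f\in\widetilde{M}$ and $\epsilon>0$; the goal is to produce $g\in M$ with $\|f-g\|\le\epsilon$. For each point $\omega_0\in\Omega$ the hypothesis $f(\omega_0)\in\overline{M}_{\omega_0}$ supplies some $g_{\omega_0}\in M$ with $\|g_{\omega_0}(\omega_0)-f(\omega_0)\|_X<\epsilon/2$. Continuity of both $g_{\omega_0}$ and $f$ then produces an open neighbourhood $U_{\omega_0}$ of $\omega_0$ throughout which $\|g_{\omega_0}(\omega)-f(\omega)\|_X<\epsilon$.

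Next I would invoke compactness of $\Omega$ to extract a finite subcover $U_{\omega_1},\dots,U_{\omega_n}$, with associated elements $g_1,\dots,g_n\in M$. Since $\Omega$ is compact Hausdorff, hence normal, there is a continuous partition of unity $\phi_1,\dots,\phi_n$ subordinate to this cover, i.e. $\phi_i\in C(\Omega)$, $0\le\phi_i\le1$, $\supp\phi_i\subseteq U_{\omega_i}$, and $\sum_i\phi_i\equiv1$. Because $M$ is a $C(\Omega)$-module, $g:=\sum_i\phi_i g_i$ lies in $M$. Writing $g(\omega)-f(\omega)=\sum_i\phi_i(\omega)(g_i(\omega)-f(\omega))$ and noting that the $i$-th summand is nonzero only when $\omega\in\supp\phi_i\subseteq U_{\omega_i}$, where $\|g_i(\omega)-f(\omega)\|_X<\epsilon$, the convex-combination estimate $\|g(\omega)-f(\omega)\|_X\le\sum_i\phi_i(\omega)\|g_i(\omega)-f(\omega)\|_X<\epsilon$ holds at every $\omega$. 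Hence $\|f-g\|\le\epsilon$ with $g\in M$, and letting $\epsilon\to0$ gives $f\in\overline{M}$.

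I expect the only genuine obstacle to be the localisation-and-patching step: one must ensure that approximants chosen independently point by point can be merged into a single global element of $M$. It is precisely the module structure (closure of $M$ under multiplication by $C(\Omega)$) together with the existence of partitions of unity on a compact Hausdorff space that makes this merging legitimate; everything else reduces to bookkeeping with the uniform norm.
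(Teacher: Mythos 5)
Your proposal is correct and follows essentially the same route as the paper's own proof: both establish $\overline{M}\subseteq\widetilde{M}$ from the closedness of $\widetilde{M}$, and both prove the reverse inclusion by choosing pointwise approximants $g_\omega\in M$, extracting a finite subcover of the resulting open sets, and patching with a continuous partition of unity, using the $C(\Omega)$-module property to keep the glued function inside $M$. The only differences are cosmetic (your $\epsilon/2$ bookkeeping and the explicit verification that $\widetilde{M}$ is closed), so there is nothing to change.
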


\begin{proof} Since $M\subseteq\widetilde{M}$ and $\widetilde{M}$ is closed, we have $\overline{M}\subseteq\widetilde{M}$. Let $f\in \widetilde{M}$ and $\epsilon>0$. The desired equality will be verified if we show that there is $g\in M$ such that $\|f-g\|<\epsilon$. Indeed, in this case $\widetilde{M}\subseteq\overline{M}$ and therefore $\overline{M}=\widetilde{M}$. 

Take $\omega\in\Omega$. Since $M_\omega$ is dense in $\overline{M}_\omega$, there is $g_\omega\in M$ such that $\|f(\omega)-g_\omega(\omega)\|_X<\epsilon$. Then $V_\omega=\{s\in\Omega:\|f(s)-g_\omega(s)\|_X<\epsilon\}$ is an open subset of $\Omega$ containing $\omega$. Thus $\{V_\omega\}_{\omega\in\Omega}$ is an open covering of $\Omega$. Since for every open covering of a Hausdorff compact topological space, there is a finite partition of unity consisting of continuous functions and subordinate to the covering \cite{engel}, there are $\omega_1,\dots,\omega_n\in\Omega$ and $\rho_1,\dots,\rho_n\in C(\Omega)$ such that 
\begin{equation}\label{paun}
\begin{array}{l} 0\leq \rho_j(s)\leq 1\ \ \text{for every $1\leq j\leq n$ and $s\in\Omega$;}\\
\rho_j(s)=0\ \ \text{whenever $1\leq j\leq n$ and $s\in\Omega\setminus V_{\omega_j}$;}\\
\rho_1(s)+{\dots}+\rho_n(s)=1\ \ \text{for each $s\in\Omega$.}
\end{array}
\end{equation}
Now we set $g=\rho_1g_{\omega_1}+{\dots}+\rho_ng_{\omega_n}$. Since $M$ is a $C(\Omega)$-module and $g_\omega\in M$, we have $g\in M$. Using (\ref{paun}) together with the inequality $\|f(s)-g_{\omega_j}(s)\|_X<\epsilon$ for $s\in V_{\omega_j}$, we easily see that $\|f(s)-g(s)\|_X<\epsilon$ for each $s\in\Omega$. Hence $g\in M$ and $\|f-g\|<\epsilon$, which completes the proof.
\end{proof}

{\bf We are ready to prove Proposition~\ref{cfu}.} \ Let $M$ be a closed submodule of $C(\Omega,X)$ such that none of the characters on $C(\Omega,X)$ vanishes on $M$. According to (\ref{charc}), the latter means that every $M_\omega=\{f(\omega):f\in M\}$ is dense in $X$ and therefore $\overline{M}_\omega=X$ for each $\omega\in\Omega$. Since $M$ is closed, Lemma~\ref{clmo} says that $M=C(\Omega,X)$. The proof is complete.

\section{Proof of Propositions~\ref{fge}} 

We start with the following easy observation. Let $\kappa\in\Omega(\A)$. Then the $\A$-module morphisms $\psi:\A^n\to\C_\kappa$ are all given by 
$$
\phi_c(a_1,\dots,a_n)=\sum_{j=1}^n c_j\kappa(a_j),\ \ \text{where $c\in \C^n$.}
$$

We shall prove a statement slightly stronger than Proposition~\ref{fge}. 

\begin{proposition}\label{fge1} Let $n\in\N$ and $M$ be an $\A$-submodule of the free $\A$-module $\A^n$. Assume also that none of the characters on $\A^n$ vanishes on $M$. Then $M=\A^n$.
\end{proposition}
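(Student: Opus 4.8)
The plan is to first reformulate the hypothesis in Gelfand-theoretic terms and then argue by induction on $n$. For $\kappa\in\Omega(\A)$ write $\kappa^{(n)}:\A^n\to\C^n$ for the map $(a_1,\dots,a_n)\mapsto(\kappa(a_1),\dots,\kappa(a_n))$, and set $M_\kappa=\kappa^{(n)}(M)$, a linear subspace of $\C^n$. By the description of the morphisms recalled just above, the characters on $\A^n$ are exactly the functionals $\phi_c(a_1,\dots,a_n)=\sum_{j=1}^n c_j\kappa(a_j)$ with $\kappa\in\Omega(\A)$ and $c\in\C^n\setminus\{0\}$ (the zero homomorphism yields only the zero functional and hence no character). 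Such a $\phi_c$ vanishes on $M$ precisely when $c$ annihilates $M_\kappa$ under the standard bilinear pairing on $\C^n$, so the assumption that no character vanishes on $M$ is equivalent to the requirement
\begin{equation}\label{hyp} M_\kappa=\C^n\quad\text{for every }\kappa\in\Omega(\A). \end{equation}

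The base case $n=1$ is exactly the Gelfand-theoretic fact recalled in the introduction: an ideal $M\subseteq\A$ with $\kappa|_M\neq0$ for every $\kappa\in\Omega(\A)$ (which is what \eqref{hyp} says for $n=1$) coincides with $\A$. For the inductive step, assume the statement for $n-1$ and let $M\subseteq\A^n$ satisfy \eqref{hyp}. Let $\pi_n:\A^n\to\A$ be the projection onto the last coordinate and put $J=\pi_n(M)$, an ideal of $\A$. Condition \eqref{hyp} forces the last coordinate of $M_\kappa$ to fill all of $\C$, that is $\kappa(J)=\C\neq0$ for every $\kappa$, whence $J=\A$ by the base case. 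In particular there is $m=(b_1,\dots,b_{n-1},1)\in M$ whose last coordinate is the unit of $\A$.

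The element $m$ lets me clear the last coordinate: for any $x=(a_1,\dots,a_n)\in\A^n$ the element $x-a_nm$ lies in $\A^{n-1}\times\{0\}$, and since $a_nm\in M$ we have $x-a_nm\in M$ if and only if $x\in M$. Hence $M=\A^n$ will follow once I show that $M'=M\cap(\A^{n-1}\times\{0\})$, viewed as a submodule of $\A^{n-1}$, equals $\A^{n-1}$. To apply the inductive hypothesis I must verify \eqref{hyp} for $M'$, and this is the step I expect to be the crux. Given $\kappa$ and an arbitrary $v\in\C^{n-1}$, I use \eqref{hyp} for $M$ to pick $x\in M$ with $\kappa^{(n)}(x)=(v_1,\dots,v_{n-1},0)$; subtracting $a_nm$ produces $x'=x-a_nm\in M'$ whose first $n-1$ Gelfand coordinates are unchanged, because the last Gelfand coordinate of $x$ was chosen to be $0$ and hence $\kappa(a_n)=0$. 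Thus $\kappa^{(n-1)}(x')=v$, so $M'$ satisfies \eqref{hyp} in dimension $n-1$, the inductive hypothesis gives $M'=\A^{n-1}\times\{0\}$, and therefore $M=\A^n$, completing the induction.
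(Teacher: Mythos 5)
Your proof is correct, and it rearranges the inductive step in a genuinely different (in fact, dual) way from the paper's. Both arguments induct on $n$ with the Gelfand fact for ideals as the base case, and both rest on the same description of the module morphisms $\A^n\to\C_\kappa$. But the paper runs the step on the quotient side: it first applies the \emph{induction hypothesis} to the projection of $M$ onto $\A^{n-1}$ to get surjectivity, then examines the ideal $J$ cut out by $M\cap(\A\times\{0\})=J\times\{0\}$, and in the case $J\neq\A$ it reaches a contradiction by building a character vanishing on $M$ from the map $\psi(b)=\kappa(a)$ for $(a,b)\in M$ --- a step that requires checking that $\psi$ is well defined, linear, continuous and a module morphism. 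You instead work on the submodule side: you apply only the \emph{base case} to the projected ideal $J=\pi_n(M)$ (your condition $\kappa^{(n)}(M)=\C^n$ for all $\kappa\in\Omega(\A)$ gives $\kappa|_J\neq 0$, hence $J=\A$), extract an element $m\in M$ with last coordinate $1$, and use $m$ to transfer the hypothesis down to the slice $M'=M\cap(\A^{n-1}\times\{0\})$, to which the induction hypothesis then applies directly. Your route buys a cleaner argument: no case split, no proof by contradiction, and no well-definedness/continuity verification, with the Gelfand-side reformulation making the bookkeeping transparent; the small price is the explicit correction step $x\mapsto x-a_nm$, which is elementary. The paper's route, by contrast, produces the offending character explicitly when $M$ is proper, which is closer in spirit to the definition of niceness, but is otherwise no stronger. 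Both proofs are complete and of essentially the same length; yours could be substituted for the paper's without loss.
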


\begin{proof}
We use induction with respect to $n$. The case $n=1$ is trivial (see the remark at the very start of the article). Assume now that $n\geq 2$ and that the conclusion of Proposition~\ref{fge} holds for every smaller $n$. We interpret $\A^n$ as $\A^n=\A\times \A^{n-1}$. The induction hypothesis easily implies that that the projection of $M$ onto $\A^{n-1}$ is onto. Let $J\subseteq \A$ be defined by $M\cap (\A\times\{0\})=J\times\{0\}$. Then $J$ is an ideal in $A$. If $J=\A$, we can factor out the first component in the product $\A\times \A^{n-1}=\A^n$ and then use the induction hypothesis to conclude that $M=\A^n$. Thus it remains to consider the case $J\neq\A$. Then there is $\kappa\in\Omega(\A)$ such that $J\subseteq\ker\kappa$. Using the definition of $J$, and the facts that $M$ is an $\A$-module, $M$ projects onto the entire $\A^{n-1}$ and $\kappa$ vanishes on $J$, we can define 
$\psi:\A^{n-1}\to \C$ by the rule $\psi(b)=\kappa(a)$ if $(a,b)\in M\subseteq \A\times \A^{n-1}$. It is easy to see that $\psi$ is a well-defined continuous linear functional and that $\psi:\A^{n-1}\to\C_\kappa$ is an $\A$-module morphism. According to the above display there are $c_1,\dots,c_{n-1}\in\C$ such that $\psi(a_1,\dots,a_{n-1})=\sum\limits_{j=1}^{n-1}c_j\kappa(a_j)$ for every $a_1,\dots,a_{n-1}\in\A$. By definition of $\psi$, we now see that $\phi:\A^n\to\C$ vanishes on $M$, where $\phi$ is defined by the formula $\phi(a_1,\dots,a_{n})=\sum\limits_{j=1}^{n}c_j\kappa(a_j)$ with $c_n=-1$. By the above display, $\phi:\A^n\to\C_\kappa$ is an $\A$-module morphism. Since $c_n\neq 0$, $\phi\neq 0$ and therefore $\phi$ is a character on $\A^n$. We have produced a character on $\A^n$ vanishing on $M$, which contradicts the assumptions. Thus the case $J\neq \A$ does not occur, which completes the proof.
\end{proof}

\section{Proof of Theorem~\ref{ma}} 

In this section, for a function $f$ on an interval $I$ of the real line $\|f\|_2$ will always denote the $L^2$-norm of $f$ (with respect to the Lebesgue measure), while $\|f\|_\infty$ always stands for the $L^\infty$-norm of $f$. 

\begin{lemma}\label{5.7}Let $-\infty<\alpha<\beta<+\infty$, $a,b\in\C$ and
$\epsilon>0$. Then there exists $f\in C^1[\alpha,\beta]$ such that
$f(\alpha)=f(\beta)=0$, $f'(\alpha)=a$, $f'(\beta)=b$ and
$\|f\|_\infty<\epsilon$. \end{lemma}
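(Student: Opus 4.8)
The plan is to build $f$ from two localized bumps, one near $\alpha$ carrying the boundary data $f(\alpha)=0$, $f'(\alpha)=a$, and one near $\beta$ carrying $f(\beta)=0$, $f'(\beta)=b$, with each bump supported on an interval so short that its sup norm is forced to be tiny. The whole point is that a $C^1$ function which starts at height $0$ with a fixed slope and returns to $0$ over an interval of length $\delta$ cannot climb higher than $O(\delta)$, so shrinking $\delta$ drives $\|f\|_\infty$ below $\epsilon$ while leaving the prescribed slopes at the endpoints untouched. Note that a fixed Hermite interpolant would meet the four boundary conditions but have a sup norm comparable to $\beta-\alpha$, so localization is genuinely needed.

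First I would fix a single template function $\phi\in C^1[0,+\infty)$ with $\phi(0)=0$, $\phi'(0)=1$, and $\phi(s)=\phi'(s)=0$ for all $s\geq 1$; the explicit choice $\phi(s)=s(1-s)^2$ on $[0,1]$ extended by zero does the job, since $\phi(1)=\phi'(1)=0$ guarantees that the extension is genuinely $C^1$. For a scale parameter $\delta\in(0,(\beta-\alpha)/2)$ I would then set
$$
f_L(t)=a\delta\,\phi\!\left(\tfrac{t-\alpha}{\delta}\right),\qquad
f_R(t)=-b\delta\,\phi\!\left(\tfrac{\beta-t}{\delta}\right),
$$
each understood to be $0$ once the argument of $\phi$ exceeds $1$. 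A direct differentiation gives $f_L(\alpha)=0$, $f_L'(\alpha)=a$ and $f_R(\beta)=0$, $f_R'(\beta)=b$, while $f_L$ is supported in $[\alpha,\alpha+\delta]$ and $f_R$ in $[\beta-\delta,\beta]$; moreover $\|f_L\|_\infty=|a|\delta\|\phi\|_\infty$ and $\|f_R\|_\infty=|b|\delta\|\phi\|_\infty$.

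Finally I would set $f=f_L+f_R$. Because $\delta<(\beta-\alpha)/2$, the two supports are disjoint, so $f$ is $C^1$ (each summand is $C^1$ and they never interact) and it realizes all four boundary conditions; the disjointness of supports yields $\|f\|_\infty\leq\max(|a|,|b|)\,\delta\,\|\phi\|_\infty$. Choosing $\delta$ small enough that the right-hand side is below $\epsilon$ completes the construction.

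I expect the only genuinely delicate point to be the $C^1$ matching at the interior junctions $t=\alpha+\delta$ and $t=\beta-\delta$: the argument works precisely because the template satisfies $\phi(1)=\phi'(1)=0$, which makes both $f_L$ and $f_R$ join their identically-zero continuations with matching value and matching derivative. Everything else — the boundary values, the boundary slopes, and the sup-norm estimate — is an immediate consequence of the linear rescaling $t\mapsto (t-\alpha)/\delta$, which multiplies heights by $\delta$ but preserves the slopes at the endpoints.
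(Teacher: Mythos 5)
Your proof is correct and follows essentially the same strategy as the paper's: two $C^1$ bumps supported in $[\alpha,\alpha+\delta]$ and $[\beta-\delta,\beta]$, vanishing identically in between, with the rescaling forcing $\|f\|_\infty=O(\delta\max\{|a|,|b|\})$. The only cosmetic difference is the template (the paper multiplies the linear functions $a(x-\alpha)$ and $b(x-\beta)$ by a rescaled cutoff with $\phi(0)=1$, $\phi'(0)=0$, while you rescale a single bump with $\phi(0)=0$, $\phi'(0)=1$), which changes nothing of substance.
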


\begin{proof} Let $\phi\in C^1[0,\infty)$ be a monotonically
non-increasing function such that $\phi(0)=1$, $\phi'(0)=0$ and
$\phi(x)=0$ for $x\geq1$. For any
$\delta\in(0,\frac{\beta-\alpha}2)$ let
$$
f_\delta(x)=\left\{\begin{array}{ll}0&\text{if
$x\in(\alpha+\delta,\beta-\delta)$},\\
a(x-\alpha)\phi((x-\alpha)/\delta)&\text{if
$x\in[\alpha,\alpha+\delta)$},\\
b(x-\beta)\phi((\beta-x)/\delta)&\text{if
$x\in(\beta-\delta,\beta]$}.\end{array}\right.
$$
Obviously, $f_\delta\in C^1[\alpha,\beta]$,
$f_\delta(\alpha)=f_\delta(\beta)=0$, $f_\delta'(\alpha)=a$,
$f_\delta'(\beta)=b$ and $\|f\|_\infty\leq\delta\max\{|a|,|b|\}$.
Hence the function $f=f_\delta$ for
$\delta<\epsilon/\max\{|a|,|b|\}$ satisfies all desired
conditions. \end{proof}

\begin{lemma}\label{5.8}Let $K\subset[0,1]$ be a nowhere dense compact set,
$a\in C(K)$, $f\in C[0,1]$ and $\epsilon>0$. Then there exists
$g\in C^1[0,1]$ such that $g'\bigr|_{K}=a$ and
$\|g-f\|_\infty<\epsilon$. \end{lemma}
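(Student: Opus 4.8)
The plan is to reduce to the case $a=0$ and then exploit the density of the complement of $K$ to build a function that is locally constant near $K$, so that its derivative automatically vanishes on $K$.

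First I would extend $a$ to a continuous function $\tilde a\in C[0,1]$ by the Tietze extension theorem and set $D(x)=\int_0^x\tilde a(t)\,dt$, so that $D\in C^1[0,1]$ and $D'|_K=a$. Writing the sought $g$ as $g=D+p$, the two requirements $g'|_K=a$ and $\|g-f\|_\infty<\epsilon$ become $p'|_K=0$ and $\|p-\phi\|_\infty<\epsilon$, where $\phi=f-D\in C[0,1]$. Thus it suffices to treat the case $a=0$: given $\phi\in C[0,1]$ and $\epsilon>0$, produce $p\in C^1[0,1]$ with $p'|_K=0$ and $\|p-\phi\|_\infty<\epsilon$.

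For this, fix $\delta>0$ with $|\phi(x)-\phi(y)|<\epsilon/4$ whenever $|x-y|\le\delta$. Since $K$ is closed and nowhere dense, $[0,1]\setminus K$ is open and dense, so I can choose a partition $0=t_0<t_1<\dots<t_m=1$ of mesh $<\delta$ whose interior nodes $t_1,\dots,t_{m-1}$ lie in $[0,1]\setminus K$, together with radii $r_i>0$ small enough that the closed ``jump-zones'' $[t_i-r_i,t_i+r_i]$ are pairwise disjoint and contained in $[0,1]\setminus K$. On the complementary ``plateaus'' I set $p$ equal to the constant $\phi(t_{i-1})$; on each jump-zone I let $p$ be the affine interpolant from $\phi(t_{i-1})$ to $\phi(t_i)$ plus a correction supplied by Lemma~\ref{5.7}, the latter vanishing at $t_i\pm r_i$, carrying there the derivatives needed to cancel the slope of the interpolant, and having sup-norm below $\epsilon/4$. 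This glues to a $C^1$ function, since at every junction both $p$ and $p'$ (the latter equal to $0$) match.

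The verification is then routine: the jump-zones avoid $K$, so $K$ is covered by the plateaus, on each of which $p$ is constant, giving $p'|_K=0$; and uniform continuity yields $|p-\phi|<\epsilon/2$ everywhere, since on a jump-zone $p$ stays between $\phi(t_{i-1})$ and $\phi(t_i)$, both within $\epsilon/4$ of $\phi(t_i)$, while $|\phi(x)-\phi(t_i)|<\epsilon/4$ there. The single step that needs care — and the conceptual heart of the argument — is securing $p'=0$ at \emph{every} point of $K$ while still tracking $\phi$: a single constant near $K$ cannot approximate a non-constant $\phi$, so the entire variation of $p$ must be pushed into the gaps of $K$, and this is exactly what the density of $[0,1]\setminus K$ makes possible, allowing every node and every jump-zone to sit strictly inside the complement of $K$.
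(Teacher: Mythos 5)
Your proposal is correct, and it runs on the same engine as the paper's proof: a Tietze extension, a finite system of small intervals whose existence is exactly what nowhere density of $K$ provides, and Lemma~\ref{5.7} to glue $C^1$ pieces across gaps lying in $[0,1]\setminus K$. The difference is in the normalization, i.e.\ in the decomposition of $g$. The paper first replaces $f$ by a $C^1$ function (density of $C^1[0,1]$ in $C[0,1]$), Tietze-extends $a-f'$ to $h\in C[0,1]$, and builds a perturbation $\psi$ with $\psi'=h$ on short intervals covering $K$; the smallness of $\|\psi\|_\infty$ comes from the shortness of those intervals, and on the gaps Lemma~\ref{5.7} is combined with affine terms that reset the accumulated integrals $a_j=\int_{\alpha_j}^{\beta_j}h(t)\,dt$ back to zero. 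You instead normalize $a=0$ by subtracting an antiderivative $D$ of a Tietze extension of $a$, which converts the problem into approximating $\phi=f-D\in C[0,1]$ by a $C^1$ function that is locally constant near $K$; smallness then comes from uniform continuity of $\phi$ over a mesh-$\delta$ partition rather than from interval length times $\|h\|_\infty$, and your jump-zones need no analogue of the paper's affine reset terms, since the plateau constants $\phi(t_{i-1})$ already carry the accumulated variation. Both routes are equally elementary; yours trades the paper's bookkeeping with the $a_j$'s for the (easy) extra care of placing all partition nodes in the open dense set $[0,1]\setminus K$. One arithmetic slip worth fixing: on a jump-zone $p$ does \emph{not} stay between $\phi(t_{i-1})$ and $\phi(t_i)$, because the Lemma~\ref{5.7} correction can push it off the interpolating segment by up to $\epsilon/4$; the resulting bound is $|p(x)-\phi(x)|<3\epsilon/4$ rather than $\epsilon/2$, which is still $<\epsilon$, so the proof stands.
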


\begin{proof} Since $C^1[0,1]$ is dense in the Banach space $C[0,1]$, we
can, without loss of generality, assume that $f\in C^1[0,1]$.
Since any continuous function on $K$ admits a continuous extension
to $[0,1]$ (one can apply, for instance, the Tietze theorem
\cite{engel}), there exists $h\in C[0,1]$ such that
$h(x)=a(x)-f'(x)$ for any $x\in K$. Let $\delta>0$. Since $K$ is
nowhere dense, there exist
$$
0=\alpha_1<\beta_1<\alpha_2<\beta_2<\dots<\alpha_n<\beta_n=1
$$
such that $\beta_j-\alpha_j<\epsilon$ for any $j=1,\dots,n$ and
$K\subset\bigcup\limits_{j=1}^n I_j$, where
$I_j=[\alpha_j,\beta_j]$. Let
$$
a_j=\int_{\alpha_j}^{\beta_j} h(t)\,dt\ \ \text{for $1\leq j\leq n-1$}.
$$
By Lemma~\ref{5.7}, for $1\leq j\leq n-1$, there is
$\phi_j\in C^1[\beta_j,\alpha_{j+1}]$ such that
$\phi_j(\beta_j)=\phi_j(\alpha_{j+1})=0$,
$\phi_j'(\beta_j)=h(\beta_j)+\frac{a_j}{\alpha_{j+1}-\beta_j}$,
$\phi_j'(\alpha_{j+1})=h(\alpha_{j+1})+\frac{a_j}{\alpha_{j+1}-\beta_j}$
and $\|\phi_j\|_\infty<\delta$. Consider the function
$$
\psi(x)=\left\{\begin{array}{ll}\int_{\alpha_j}^xh(t)\,dt&\text{if
$x\in[\alpha_j,\beta_j]$, $1\leq j\leq n$},\\
\phi_j(x)+\frac{a_j(x-\alpha_{j+1})}{\beta_j-\alpha_{j+1}}&\text{if
$x\in(\beta_j,\alpha_{j+1})$, $1\leq j\leq n-1$}.\end{array}\right.
$$
The values of $\phi_j'$ at $\beta_j$ and $\alpha_{j+1}$
were chosen in such a way that $\psi\in C^1[0,1]$. Moreover, $\psi'\bigr|_{I_j}=h$ for
$1\leq j\leq n$. Hence,
$(\psi+f)'\bigr|_{K}=a$. Let us estimate $\|\psi\|_\infty$. If $1\leq j\leq n-1$ and 
$x\in[\beta_j,\alpha_{j+1}]$, then 
$|\psi(x)|\leq\delta+|a_j|\leq
\delta+|\beta_j-\alpha_j|\|h\|_\infty\leq \delta(1+\|h\|_\infty)$.
If $1\leq j\leq n$ and $x\in[\alpha_j,\beta_j]$, then $|\psi(x)|\leq
|\beta_j-\alpha_j|\|h\|_\infty\leq\delta\|h\|_\infty$. Hence
$\|\psi\|_\infty\leq \delta(1+\|h\|_\infty)$. Choose 
$\delta<\epsilon/(1+\|h\|_\infty)$ and denote $g=\psi+f$. Then
$g'\bigr|_{K}=a$ and $\|g-f\|_\infty=\|\psi\|_\infty<\epsilon$.
\end{proof}

\begin{lemma}\label{5.9}Let $K\subset[0,1]$ be a nowhere dense compact set and
$\epsilon>0$. Then there exists $f\in C(K)$ such that
$$
\int_K f(t)\,dt=0\ \ \text{and}\ \
\|\chi+g\|_2\leq\epsilon,\ \ \text{where}\ \ g(x)=\int_{K\cap[x,1]}f(t)\,dt
$$
and $\chi$ is the indicator function of $K$ $(\chi(x)=1$ if $x\in K$ and $\chi(x)=0$ if $x\in[0,1]\setminus K)$. 
\end{lemma}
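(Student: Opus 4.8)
The plan is to reduce the whole construction to the choice of a single $C^1$ function of one real variable, by pushing the measure forward through the distribution function of Lebesgue measure on $K$. First, if $K$ has Lebesgue measure zero then $\chi=0$ almost everywhere and $f=0$ works, so I may assume $m>0$, where $m$ denotes the Lebesgue measure of $K$. Put $\Phi(x)=|K\cap[0,x]|$ (Lebesgue measure of $K\cap[0,x]$). Then $\Phi$ is continuous and non-decreasing, $\Phi(0)=0$, $\Phi(1)=m$, $\Phi$ is constant on each connected component of the open set $U=[0,1]\setminus K$, and $\Phi'=\chi$ almost everywhere. The key structural fact is that $\Phi$ pushes the measure $\chi\,dx$ forward to Lebesgue measure on $[0,m]$, so that $\int_K\eta(\Phi(x))\,dx=\int_0^m\eta(u)\,du$ for every bounded Borel function $\eta$.

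Next I would look for $f$ of the form $f=-G'\circ\Phi$ restricted to $K$, where $G\in C^1[0,m]$ is to be chosen with $G(0)=G(m)=0$. Since $\Phi$ is continuous and $G'$ is continuous, $f\in C(K)$, as required. A direct change of variables gives $\int_K f\,dt=G(0)-G(m)=0$, so the first condition of the lemma holds automatically. For the tail integral one computes, by the same substitution $u=\Phi(t)$,
\[
g(x)=\int_{K\cap[x,1]}f\,dt=-\int_{\Phi(x)}^{m}G'(u)\,du=G(\Phi(x))-G(m)=G(\Phi(x)).
\]
Thus $g=G\circ\Phi$ is automatically constant on each component of $U$, and the entire problem is transferred to the choice of $G$.

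I would then rewrite the target norm. Splitting the integral over $K$ and over $U$ and applying the pushforward identity on the $K$-part, one obtains
\[
\|\chi+g\|_2^2=\int_0^m\bigl(1+G(u)\bigr)^2\,du+\sum_j \ell_j\,G(v_j)^2 ,
\]
where the sum runs over the components of $U$, with $\ell_j$ their lengths and $v_j$ the common value of $\Phi$ on the $j$-th component. So I need a $G$ for which $\int_0^m(1+G)^2$ is small (forcing $G\approx-1$ on the bulk of $[0,m]$) while $G(v_j)\approx0$ for the components carrying most of the total length $1-m$.

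Finally, the construction of $G$. Given $\eta>0$, since $\sum_j\ell_j=1-m<\infty$ I can select finitely many components of $U$ (the ``large'' ones) so that all remaining components have total length $<\eta$; let $F\subset[0,m]$ be the finite set of $\Phi$-values of the chosen large components. I then take $G\in C^1[0,m]$ with values in $[-1,0]$, equal to $0$ on a neighborhood of the finite set $F\cup\{0,m\}$ of total length $<\delta$, and equal to $-1$ off that neighborhood, with smooth transitions. For this $G$ one has $\int_0^m(1+G)^2\leq\delta$, because $1+G$ vanishes off a set of measure $<\delta$ and is bounded by $1$ there; and $\sum_j\ell_j G(v_j)^2\leq\eta$, because $G(v_j)=0$ on the large components while $|G|\leq1$ bounds the contribution of the remaining ones by their total length $<\eta$. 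Choosing $\delta,\eta$ with $\delta+\eta\leq\epsilon^2$ gives $\|\chi+g\|_2\leq\epsilon$. The genuinely delicate steps are the pushforward change-of-variables identity for $\Phi$ (which requires $m>0$ and the continuity of $\Phi$) and the observation that the obstruction to making $g$ approximate $-\chi$ is concentrated at the finitely many $\Phi$-values of the large gaps, hence can be cancelled on a set of arbitrarily small measure; everything else is routine estimation.
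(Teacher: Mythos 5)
Your proof is correct, and it takes a genuinely different route from the paper's. The paper constructs $f$ by hand: it covers $K$ by finitely many intervals $[\alpha_k,v_k]$ with endpoints off $K$, places a normalized positive spike of value $1/\mu(K\cap[\alpha_k,\beta_k])$ on the left edge piece and a matching negative spike on the right edge piece of each interval, and then checks directly that $g(x)=\int_{K\cap[x,1]}f$ equals $-1$ on the bulk of each covering interval and $0$ outside them, so that $g+\chi$ is supported on a set of measure $O(\epsilon^2)$; the proof is elementary but requires explicit interval bookkeeping (and implicitly uses that the chosen intervals cover $K$). You instead push the problem through the distribution function $\Phi(x)=|K\cap[0,x]|$: since $\Phi$ is $1$-Lipschitz, non-decreasing, constant on the gaps, and pushes $\chi\,dx$ forward to Lebesgue measure on $[0,m]$, the choice of $f\in C(K)$ reduces to the choice of a single $G\in C^1[0,m]$ with $G(0)=G(m)=0$, via $f=-G'\circ\Phi$ and $g=G\circ\Phi$, and the target norm splits exactly as $\int_0^m(1+G)^2\,du+\sum_j\ell_j G(v_j)^2$. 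Your reduction buys several things: $\int_K f=0$ and the constancy of $g$ on gaps come for free, the obstruction is isolated cleanly as the finitely many $\Phi$-values of the large gaps (so the bump-function choice of $G$ is transparent), and the subtlety about covering $K$ never arises. What the paper's construction buys is complete self-containedness: no change-of-variables or pushforward lemma is needed, and $f$ is written down explicitly. One small wording issue on your side: ``equal to $0$ on a neighborhood of $F\cup\{0,m\}$ and equal to $-1$ off that neighborhood, with smooth transitions'' should be read as $G=0$ on a small neighborhood and $G=-1$ outside a slightly larger neighborhood of total length $<\delta$; this is clearly what your estimate $\int_0^m(1+G)^2\leq\delta$ uses, so it is a matter of phrasing, not a gap.
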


\begin{proof} If the Lebesgue measure $\mu(K)$ of $K$ is zero, the
statement is trivially true since the function $f\equiv 0$
satisfies the desired conditions for any $\epsilon>0$. Thus, we
can assume that $\mu(K)>0$. Let $n\in\N$. Since $K$ is nowhere
dense and has positive Lebesgue measure, we can choose $n\in\N$ and 
$\alpha_k,\beta_k,a_k,b_k,u_k,v_k\in[0,1]\setminus K$ for $1\leq k\leq n$ in such
a way that
\begin{align}
&\alpha_k<\beta_k<a_k<b_k<u_k<v_k\ \ \text{for}\ \ 1\leq k\leq n\ \ 
\text{and}\ \ v_{k-1}<\alpha_k\ \ \text{for}\ \ 2\leq k\leq n,
\notag
\\
&\textstyle0<\mu(K\cap[\alpha_k,\beta_k])<\frac{\epsilon^2}{16n}\ \ \text{and}\ \
0<\mu(K\cap[u_k,v_k])<\frac{\epsilon^2}{16n}\ \ \text{for}\ \ 1\leq k\leq
n, \label{mu1}
\\
&\textstyle\mu\Bigl(\Bigl(\bigcup\limits_{k=1}^n[\alpha_k,v_k]\Bigr)\setminus
K\Bigr)<\frac{\epsilon^2}{8}.\label{mu2}
\end{align}
Consider the function $f:K\to\R$ defined by the formula
$$
f(x)=\left\{\begin{array}{ll}\frac{1}{\mu(K\cap[\alpha_k,\beta_k])}&\text{if}\
x\in K\cap[\alpha_k,\beta_k],\ 1\leq k\leq n;\\
\frac{-1}{\mu(K\cap[u_k,v_k])}&\text{if}\ x\in K\cap[u_k,v_k],\
1\leq k\leq n;\\
0&\text{otherwise}.
\end{array}\right.
$$
Obviously $f\in C(K)$ and
$$
\int_K
f(t)\,dt=\sum_{k=1}^n\biggl(\int_{K\cap[\alpha_k,\beta_k]}
f(t)\,dt-\int_{K\cap[u_k,v_k]}
f(t)\,dt\biggr)=\sum_{k=1}^n(1-1)=0.
$$
Let $g:[0,1]\to\R$ be defined by  
$$
g(x)=\int_{K\cap[x,1]}f(t)\,dt.
$$ 
From the definition
of $f$ it follows that $|g(x)|\leq 1$ for any $x\in[0,1]$,
$g(x)=-1$ if $\displaystyle x\in\bigcup_{k=1}^n[\beta_k,u_k]$ and
$\chi(x)=g(x)=0$ if
$x\in[0,1]\setminus\bigcup\limits_{k=1}^n[\alpha_k,v_k]$. Hence the set
$\Omega=\{x\in[0,1]:g(x)+\chi(x)\neq 0\}$ is contained in the
union
$$
\Omega_1=\left(\left(\bigcup_{k=1}^n[\alpha_k,v_k]\right)\setminus
K\right)\cup\left(\bigcup_{k=1}^n([\alpha_k,\beta_k]\cap K)\right)
\cup\left(\bigcup_{k=1}^n([u_k,v_k]\cap K)\right).
$$
Therefore
$$
\|g+\chi\|_2^2=\int\limits_0^1(g(x)+\chi(x))^2\leq
4\mu(\Omega)\leq 4\mu(\Omega_1).
$$
Using (\ref{mu1}) and (\ref{mu2}), we see that
$\mu(\Omega_1)\leq\epsilon^2/4$. Hence $\|g+\chi\|_2\leq \epsilon$.
\end{proof}

\begin{lemma}\label{5.10}Let $\{e_n\}_{n\in\Z_+}$ be an orthonormal basis in a
separable Hilbert space $H$ and scalar sequences
$\{\gamma_n\}_{n\in\N}$ and $\{\delta_n\}_{n\in\N}$ be such that
\begin{equation}
\sum_{n=1}^\infty (|\gamma_n|^2+|\delta_n|^2)<\infty.\label{garho}
\end{equation}
Let also $f_0=e_0+\sum\limits_{n=1}^\infty\gamma_ne_n$ and
$f_n=e_n-\delta_n e_0$ for $n\in\N$. Then the linear span of
$\{f_n:n\in\Z_+\}$ is dense in $H$ if and only if
\begin{equation}
\sum_{n=1}^\infty\gamma_n\delta_n\neq-1. \label{span}
\end{equation}\end{lemma}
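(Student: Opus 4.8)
The plan is to use the standard fact that in a Hilbert space the closed linear span of a family coincides with the double orthogonal complement, so that $\overline{\hbox{span}}\,\{f_n:n\in\Z_+\}=H$ if and only if the only vector orthogonal to every $f_n$ is $0$. Accordingly I would fix an arbitrary $v\in H$, expand it as $v=\sum_{k\in\Z_+}c_ke_k$ with $c_k=\langle v,e_k\rangle$, and translate the conditions $\langle v,f_n\rangle=0$ into relations among the coefficients $c_k$. All the series in sight are controlled by the hypothesis (\ref{garho}): the vector $f_0$ lies in $H$ because $\sum|\gamma_n|^2<\infty$, and the cross sum $\sum_{n\geq1}\gamma_n\delta_n$ converges absolutely by Cauchy--Schwarz.

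First I would exploit the conditions with $n\geq1$. Since $f_n=e_n-\delta_ne_0$, the relation $\langle v,f_n\rangle=0$ reads $c_n=\overline{\delta_n}\,c_0$ for every $n\in\N$. Hence any $v$ orthogonal to all $f_n$ with $n\geq1$ is completely determined by its zeroth coordinate: $v=c_0\bigl(e_0+\sum_{n=1}^\infty\overline{\delta_n}\,e_n\bigr)$, which indeed belongs to $H$ because $\sum|\delta_n|^2<\infty$.

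Next I would impose the last condition $\langle v,f_0\rangle=0$. Writing $f_0=e_0+\sum_{n\geq1}\gamma_ne_n$ and substituting the previous relation gives $\langle v,f_0\rangle=c_0+\sum_{n=1}^\infty\overline{\gamma_n}c_n=c_0\bigl(1+\sum_{n=1}^\infty\overline{\gamma_n\delta_n}\bigr)=c_0\bigl(1+\overline{S}\bigr)$, where $S=\sum_{n=1}^\infty\gamma_n\delta_n$. Since $S=-1$ is equivalent to $\overline{S}=-1$, the scalar factor $1+\overline{S}$ vanishes exactly when $S=-1$.

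This splits the argument into the two cases matching (\ref{span}). If $S\neq-1$, then $1+\overline{S}\neq0$ forces $c_0=0$, whence $c_n=\overline{\delta_n}\,c_0=0$ for every $n$ and $v=0$; thus the orthogonal complement is trivial and the span is dense. If $S=-1$, then the nonzero vector $v=e_0+\sum_{n=1}^\infty\overline{\delta_n}\,e_n$ satisfies all the orthogonality conditions, so the orthogonal complement is nontrivial and the span fails to be dense. The computation is elementary and presents no genuine obstacle; the only points demanding care are the bookkeeping of complex conjugates and the justification, via (\ref{garho}) and Cauchy--Schwarz, that $f_0$ and the candidate orthogonal vector lie in $H$ and that the series defining $S$ converges.
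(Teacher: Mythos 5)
Your proof is correct, but it takes a genuinely different (and more elementary) route than the paper. The paper realizes the family $\{f_n\}_{n\in\Z_+}$ as the image of the orthonormal basis under $S=I+T$, where $T$ is the operator with $Te_0=\sum_{n\geq1}\gamma_ne_n$ and $Te_n=-\delta_ne_0$; condition (\ref{garho}) makes $T$ bounded, its range is at most two-dimensional, so $T$ is compact, and the Fredholm alternative then converts the question ``is the range of $S$ dense?'' into ``is $S$ injective?'', after which the kernel of $S$ is computed by a coefficient calculation much like yours. You bypass the operator-theoretic machinery entirely: using the basic Hilbert-space fact that a subspace is dense if and only if its orthogonal complement is trivial, you compute that complement by hand --- in effect you determine $\ker S^*$ rather than $\ker S$, which is why the conjugates $\overline{\delta_n}$, $\overline{\gamma_n}$ appear --- and find that it is spanned by $e_0+\sum_{n\geq1}\overline{\delta_n}e_n$ exactly when $\sum_{n\geq1}\gamma_n\delta_n=-1$, and is trivial otherwise. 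Your argument needs no compactness and no citation of Fredholm theory, only (\ref{garho}), Cauchy--Schwarz for the absolute convergence of $\sum\gamma_n\delta_n$, and continuity of the inner product; the paper's version, in exchange for the extra input, packages the dichotomy as injectivity of a single concrete operator. Both proofs hinge on the same two facts supplied by (\ref{garho}): that $f_0$ and the candidate orthogonal vector genuinely lie in $H$, and that the series in (\ref{span}) converges. Your handling of the conjugates is correct throughout, so there is no gap.
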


\begin{proof} Condition (\ref{garho}) implies that the linear operator
$T:H\to H$ such that $Te_0=\sum\limits_{n=1}^\infty\gamma_ne_n$
and $Te_n=-\delta_ne_0$ for $n\in\N$ is bounded. Since the range
of $T$ is at most two-dimensional, $T$ is compact. By the 
Fredholm theorem \cite{rudin1}, the operator $S=I+T$ has dense
range if and only if $S$ is injective. Since $Se_n=f_n$ for 
$n\in\Z_+$, the linear span of $\{f_n\}_{n\in\Z_+}$
is dense in $H$ if and only if the operator $S$ injective.

The equation $Sx=0$, $x\in H$ can be rewritten as
$$
\langle
x,e_0\rangle\left(1+\sum_{n=1}^\infty\gamma_n\delta_n\right)=0\ \
\text{and}\ \ \langle x,e_n\rangle=\gamma_n\langle x,e_0\rangle\ \
\text{for any}\  \ n\in\N.
$$
If $\sum\limits_{n=1}^\infty\gamma_n\delta_n\neq-1$, the first
equation implies $\langle x,e_0\rangle=0$ and the rest yield $\langle x,e_n\rangle=0$ for each $n\in\N$. Thus in this case  $x=0$. That is, $S$ is injective and therefore the linear
span of $\{f_n:n\in\Z_+\}$ is dense in $H$. If
$\sum\limits_{n=1}^\infty\gamma_n\delta_n=-1$, the system of the
equations in the above display has the non-zero solution
$x=x_0+\sum\limits_{n=1}^\infty \gamma_ne_n\in H$. Hence $S$ is
not injective and therefore the linear span of
$\{f_n:n\in\Z_+\}$ is non-dense. \end{proof}

{\bf We are ready to prove Theorem~\ref{ma}.} \ First, if $n\in\N$ and $H$ is $n$-dimensional, then $W^{1,2}([0,1],H)$ is isomorphic to the free $W^{1,2}[0,1]$-module with $n$ generators and the nicety of $W^{1,2}([0,1],H)$ follows from Proposition~\ref{fge}. It is easy to see that a direct (module) summand of a nice module is nice. Thus the proof of Theorem~\ref{ma} will be complete if we verify that $W^{1,2}([0,1],\ell_2)$ is non-nice. In order to do this, we have to construct a proper closed $W^{1,2}[0,1]$-submodule $M$ of $W^{1,2}([0,1],\ell_2)$ such that none of the characters on $W^{1,2}([0,1],\ell_2)$ vanishes on $M$. Now we shall do just that. 

Pick a nowhere dense compact set $K\subset[0,1]$ of positive Lebesgue measure and
let $\chi$ be the indicator function of $K$. By Lemma~\ref{5.9}, there exists $A_n\in C(K)$ such that for any $n\in\N$,
\begin{align}
&\int\limits_K A_n(x)\,dx=0,\label{an1}
\\
&\|B_n+\chi\|_2<2^{-n},\ \ \text{where}\  \
B_n(x)=\int\limits_{K\cap[x,1]}A(t)\,dt.\label{an2}
\end{align}
We also set $A_0=0$, $B_0=0$ and $S_0=1$. By Lemma~\ref{5.8}, there
exist $S_n\in C^1[0,1]$ such that
\begin{equation}
S_n'\bigr|_{K}=A_n\  \ \text{and}\  \ \|S_n-1\|_\infty<2^{-n}\ \ \text{for each $n\in\N$}.
\label{sn}
\end{equation}
Denote $\rho_n=n^2(S_n-S_{n-1})$ for $n\in\N$. Then $\rho_n\in
C^1[0,1]$ and according to (\ref{sn}),
\begin{equation}
\|\rho_n\|_\infty\leq
n^2(\|S_n-1\|_\infty+\|S_{n-1}-1\|_\infty)\leq
n^2(2^{1-n}+2^{-n})=3n^22^{-n}\ \ \text{for each $n\in\N$}. \label{rhon1}
\end{equation}
Let also $\{e_n\}_{n\in\Z_+}$ be the standard orthonormal basis in
$\ell_2$. Consider the functions $f^{[n]}\in W^1_2([0,1],\ell_2)$
defined by the formulas
$$
f^{[0]}(x)=e_0+\sum_{n=1}^\infty n^{-2}e_n\ \ \text{and}\ \ 
f^{[n]}(x)=e_n-\rho_n(x)e_0\ \ \text{for $n\in\N$}.
$$
Let now $M$ be the closed $W^{1,2}[0,1]$-submodule of $W^{1,2}([0,1],\ell_2)$ generated by the set $\{f^{[n]}:n\in\Z_+\}$. Equivalently, $M$ is the closed linear span in $W^{1,2}([0,1],\ell_2)$ of
the set $\{\phi f^{[n]}:n\in\Z_+,\ \phi\in W^{1,2}[0,1]\}$. 

It is easy to see that every character on $W^{1,2}([0,1],\ell_2)$ has the shape 
$$
\phi_{t,y}(f)=\langle f(t),y\rangle_H,\ \ \text{where $t\in[0,1]$ and $y\in\ell_2\setminus\{0\}$.}
$$
Thus in order for every character on $W^{1,2}([0,1],\ell_2)$ not to vanish on $M$ it is necessary and sufficient for $M_t=\{f(t):f\in M\}$ to be dense in $\ell_2$ for every $t\in[0,1]$. Let $t\in[0,1]$. By definition of $\rho_n$ and (\ref{sn}), we have
\begin{equation}
\sum_{n=1}^\infty
n^{-2}\rho_n(t)=\lim_{m\to\infty}\sum_{n=1}^m(S_n(t)-S_{n-1}(t))=
\lim_{m\to\infty}(S_m(t)-S_0(t))=0\neq -1.\label{rhon2}
\end{equation}
By Lemma~\ref{5.10} with $\gamma_n=n^{-2}$ and $\delta_n=\rho_n(t)$, the linear span of 
$\{f^{[n]}(t)\}_{n\in\Z_+}$ is dense in $\ell_2$. Since $f^{[n]}\in M$, $M_t$ is dense in $\ell_2$. Thus none of the characters  on $W^{1,2}([0,1],\ell_2)$ vanishes on $M$. It remains to verify that $M\neq W^{1,2}([0,1],\ell_2)$. Consider $g_n\in W_2^1[0,1]^*$ for $n\in\Z_+$,  defined by the formula
$$
g_n(\phi)=\int_K(\rho_n\phi)'(x)\,dx,\ \ \text{where $\rho_0$ is assumed to be identically 1.}
$$
We start with estimating the norms of the functionals $g_n$. Clearly, 
\begin{equation}
g_n(\phi)=\int_K\rho_n(x)\phi'(x)\,dx+
\int_K\rho'_n(x)\phi(x)\,dx\ \ \text{for any $\phi\in W^{1,2}[0,1]$}. \label{gn1}
\end{equation}
Since $\rho'_n(x)=n^2(S_n'(x)-S_{n-1}'(x))=n^2(A_n(x)-A_{n-1}(x))$ for $x\in K$, we have 
$$
\int_K\rho'_n(x)\phi(x)\,dx=
n^2\int_K(A_n(x)-A_{n-1}(x))\phi(x)\,dx
=n^2\int_0^1(B'_{n-1}(x)-B'_{n}(x))\phi(x)\,dx.
$$
By (\ref{an1}) and (\ref{an2}), 
$$
B_n(0)=B_n(1)=0\ \ \text{for $n\in\Z_+$.}
$$
Integrating by parts and using the above display, we obtain
$$
\int_K\rho'_n(x)\phi(x)\,dx=n^2\int_0^1(B'_{n-1}(x)-B'_{n}(x))\phi(x)\,dx=
n^2\int_0^1(B_{n}(x)-B_{n-1}(x))\phi'(x)\,dx.
$$
This formula together with (\ref{gn1}) yields 
$$
|g_n(\phi)|\leq \|\phi'\|_2(\|\rho_n\|_2+n^2\|B_n-B_{n-1}\|_2)\ \ \text{for $n\in\N$}.
$$
Since $\|\rho_n\|_2\leq3n^22^{-n}$ and
$\|B_n-B_{n-1}\|_2\leq \|B_n+\chi\|_2+\|B_{n-1}+\chi\|_2\leq
2^{1-n}+2^{-n}=3\cdot2^{-n}$, 
we have $|g_n(\phi)|\leq 6n^22^{-n}\|\phi\|_{1,2}$.
Hence $\|g_n\|\leq 6n^22^{-n}$ for each $n\in\N$. Therefore
$\sum\limits_{n=0}^\infty\|g_n\|^2<\infty$. Thus the formula
$$
g(h)=\sum_{n=0}^\infty g_n(h_n)
$$
defines a continuous linear functional on $W^{1,2}([0,1],\ell_2)$, where, as usual, $h_n(t)=\langle h(t),e_n\rangle$. Since $g_0\neq0$, we have $g\neq 0$. In order to show that $M\neq W^{1,2}([0,1],\ell_2)$, it suffices to verify that 
$g(h)=0$ for any $h\in M$. For this it is enough to 
check that $g(\phi f^{[n]})=0$ for every $\phi\in W^{1,2}[0,1]$ and $n\in\Z_+$. First, let $n\in\N$. Then by
definition of $g_n$, we immediately have
$$
g(\phi f^{[n]})=g_n(\phi)-g_0(\rho_n\phi)=0.
$$
It remains to prove that $g(\phi f^{[0]})=0$. Using the
uniform convergence of the series $\sum\limits_{n=1}^\infty n^{-2}\rho_n$
provided by the estimate (\ref{rhon1}), we have
\begin{align*}
g(\phi f^{[0]})&=g_0(\phi)+\sum_{n=1}^\infty n^{-2}g_n(\phi)=
\int_K\Bigl(\phi'(x)+\sum_{n=1}^\infty
n^{-2}(\rho_n\phi)'(x)\Bigr)dx=
\\
&=\int_K\phi'(x)\Bigl(1+\sum_{n=1}^\infty
n^{-2}\rho_n(x)\Bigr)dx+\lim_{m\to\infty}\int_K
\phi(x)\Bigl(\sum_{n=1}^m n^{-2}\rho'_n(x)\Bigr)dx.
\end{align*}
By (\ref{rhon2}), $\sum\limits_{n=1}^\infty
n^{-2}\rho_n(x)\equiv0$. On the other hand, using (\ref{sn}) and
the equality $S_0=1$, we have
$$
\sum_{n=1}^mn^{-2}\rho'_n(x)=\sum_{n=1}^m(S_n'(x)-S_{n-1}'(x))=
S_m'(x)=A_m(x)\ \ \text{for each}\  \ x\in K.
$$
Hence
\begin{equation}
g(\phi f^{[0]})=\int_K\phi'(x)\,dx+
\lim_{m\to\infty}\int_K\phi(x)A_m(x)\,dx. \label{gn2}
\end{equation}
Integrating by parts, we obtain
$$
\int_K\phi(x)A_m(x)\,dx=-\!\int_0^1\phi(x)B'_m(x)\,dx=\!
\int_0^1\phi'(x)B_m(x)\,dx=\!\int_0^1\phi'(x)(B_m(x)+\chi(x))\,dx-\!
\int_K\phi'(x)\,dx.
$$
According to (\ref{gn2}) and the above display, 
\begin{equation}
g(\phi
f^{[0]})=\lim_{m\to\infty}\int\limits_0^1\phi'(x)(B_m(x)+\chi(x))\,dx.
\label{gn3}
\end{equation}
By (\ref{an2}) and (\ref{gn3}), $g(\phi
f^{[0]})=0$ for each $\phi\in W^{1,2}[0,1]$. Thus $g(h)=0$ for every $h\in M$ and therefore $M\neq
W^{1,2}([0,1],\ell_2)$. The proof of Theorem~\ref{ma} is complete.

\section{Remarks}

One can easily generalize Theorem~\ref{ma} by taking most any algebra of smooth functions instead of $W^{1,2}[0,1]$. For example, following the same route of argument with few appropriate amendments one can show that if $X$ is an infinite dimensional Banach space and $k\in\N$, then $C^k([0,1],X)$ as a $C^k[0,1]$-module is non-nice. We opted for $W^{1,2}([0,1],H)$ to make a point that even the friendly Hilbert space environment does not save the day. 

Theorem~\ref{ma} says that there are weird proper closed submodules of $W^{1,2}([0,1],\ell_2)$ which are not contained in any closed submodule of codimension $1$. The following question remains wide open. 

\begin{question}\label{mq1} Characterize closed submodules of $W^{1,2}([0,1],\ell_2)$.
\end{question}


%


\small\rm

%
%
%
%
%
%

\end{document}